\theoremstyle{plain}
\newtheorem{lem}{Lemma}[section]
\newtheorem{thm}[lem]{Theorem}
\newtheorem*{thm*}{Theorem}
\newtheorem{prop}[lem]{Proposition}
\theoremstyle{definition}
\newtheorem{defn}[lem]{Definition}
\newtheorem{rem}[lem]{Remark}
\newtheorem{ex}[lem]{Example}
\newcommand{\mbb}[1]{\mathbb #1}
\newcommand{\oper}[1]{\operatorname{#1}}
\DeclareSymbolFont{cyrletters}{OT2}{wncyr}{m}{n}
\DeclareMathSymbol{\Sha}{\mathalpha}{cyrletters}{"58}
\newcommand{\Br}{\oper{Br}}
\newcommand{\lb}{\left<\right.\!}
\newcommand{\rb}{\!\left.\right>}
\newcommand{\lbb}{\lb\!\lb}
\newcommand{\rbb}{\rb\!\rb}
\newcommand{\ddim}{\oper{ddim}}
\newcommand{\cdim}{\oper{cd}}
\newcommand{\N}{\oper{N}}
\newcommand{\cof}{\oper{adj}}
\title{Diophantine and cohomological dimensions}
\author{Daniel Krashen}
\author{Eliyahu Matzri}
\thanks{The first author was partially supported by NSF grants DMS-1007462 and DMS-1151252}
\thanks{The second author was partially supported by the Kreitman foundation}
\date{}
\begin{document}
\begin{abstract}
We give explicit linear bounds on the $p$-cohomological
dimension of a field in terms of its Diophantine dimension. In
particular, we show that for a field of Diophantine dimension at most
$4$, the $3$-cohomological dimension is less than or equal to the
Diophantine dimension.
\end{abstract}

\maketitle

%\tableofcontents

\section*{Introduction}

Let $F$ be a field and suppose that $p$ is a prime integer not
equal to the characteristic of $F$. Recall that a field $F$ is said to
have the $C_d$ property if every homogeneous polynomial of degree $n$
over $F$ in more than $n^d$ variables has a nontrivial zero. The
Diophantine dimension of $F$, denoted $\ddim(F)$ is defined to be the
least integer $d$ such that $F$ has the $C_d$ property. We define
the $p$-cohomological dimension of $F$, denoted $\cdim_p(F)$ to be the
least integer $d$ such that the Galois cohomology groups $H^d(L, \mbb
Z/p\mbb Z)$ vanish for every $L/F$ finite. One says that $F$ has
cohomological dimension $d$, written $\cdim(F) = d$ if $d$ is the
maximum value of $\cdim_p(F)$ taken over all $p$.

Although there are well known examples of Ax \cite{Ax} of fields of
cohomological dimension $1$ which do not have the $C_d$ property for
any $d$, it is quite possible that having a bound on the Diophantine
dimension will give one a bound on the $p$-cohomological dimension.
In \cite[page 99]{Serre:CG}, after outlining why the Milnor
Conjectures would imply $\cdim_2(F) \leq \ddim(F)$, Serre states ``Il est
probable que ce r\'esultat est \'egalement valable pour $p \neq 2$.'' On the
other hand, is was not known until present whether or not a field with
$\ddim(F)$ finite must have $\cdim_p(F)$ finite in general for $p \neq
2$.

In this paper we give explicit bounds on the $p$-cohomological
dimensions in terms of the Diophantine dimension, and show in
particular that $\cdim_p(F)$ grows at most linearly with respect to
$\ddim(F)$ (see Theorem~\ref{main}).

Throughout, we let $p$ be a prime number and use the notation
$K^M_i(F)$ to denote the $i$'th Milnor K-group of $F$, and $k_i(F) =
K^M_i(F)/pK^M_i(F)$. By the Bloch-Kato conjecture, we may identify
$k_i(F) = H^i(F, \mu_p^{\otimes i})$. We will use the notation $(a_1,
\ldots, a_n)$ to denote a symbol in the group $k_n(F)$. In the case
that $\rho \in F$ is a primitive $p$'th root of unity, we will use the
notation $D_{(a, b)}$ to denote the symbol algebra generated by
elements $x, y$ and satisfying $x^p = a, y^p = b, xy = \rho yx$.

\subsection*{Acknowledgments}

The authors would like to thank Skip Garibaldi for useful
conversations, and help with finding references during the writing of
this manuscript.

\section{Homogeneous forms and Milnor K-theory}

\begin{defn}
By a form of degree $d$ and dimension $n$, we mean a homogeneous
degree $d$ polynomial function $f \in F[V^*]$ where $V$ is an
$n$-dimensional vector space. We will also refer to $f$ as a degree
$d$ form on $V$. We say that $f$ is isotropic if there is a $v \in
V\setminus\{0\}$ such that $f(v) = 0$.
\end{defn}

\begin{defn}
Suppose that $\alpha \in k_n(F)$, and let $V$ be a vector space. We
say that a form $N$ on $V$ 
\begin{itemize} 
\item
\textit{neutralizes $\alpha$} if $\alpha
\cup (N(v))$ is trivial in $k_n(L)$ for every field extension $L/F$
and every choice of $v \in V_L$ with $N(v) \neq 0$, 
\item
\textit{splits $\alpha$} if for every field extension $L/F$, we have
$N_L$ isotropic implies $\alpha_L = 0$.
\end{itemize} 
\end{defn}

\begin{defn}
Let $A$ be a unital finite dimensional strictly power associative
algebra, and let $N_A \in F[A^*]$ be the reduced norm form (see
Section~\ref{appendix}).  As in Section~\ref{appendix}, we say that
$A$ is principally division, if for every $a \in A$, the subalgebra
$F[a]$ generated by $a$ is a division algebra. We say that $A$ is
\textit{adapted to a class} $\alpha \in k_n(F)$ if $N_A$ is degree
$p$ and if for every field extension $L/F$, $A_L$ is principally
division if and only if $\alpha_L \neq 0$.
\end{defn}

\begin{lem}
Suppose that $A$ is a unital finite dimensional strictly power
associative algebra which is adapted to $\alpha \in k_n(F)$. Then
$N_A$ neutralizes and splits $\alpha$.
\end{lem}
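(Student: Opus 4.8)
The plan is to establish the two assertions separately, in both cases invoking adaptedness in its contrapositive form: for every field extension $L/F$, if $A_L$ is not principally division then $\alpha_L = 0$. I will also use two properties of the reduced norm form supplied by Section~\ref{appendix}: that $N_A(a) \neq 0$ if and only if $a$ is invertible inside the commutative associative subalgebra $L[a]$, and that when $L[a]$ is a field, $N_A(a)$ agrees with the field norm $N_{L[a]/L}(a)$ up to a $p$-th power.

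\emph{Splitting.} Suppose $N_A$ is isotropic over some $L/F$, say $N_A(v) = 0$ with $0 \neq v \in A_L$. By the invertibility criterion, $v$ is a nonzero noninvertible element of $L[v]$, so $L[v]$ is not a division algebra and hence $A_L$ is not principally division. Adaptedness then forces $\alpha_L = 0$, which is exactly what splitting requires.

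\emph{Neutralizing.} Fix $L/F$ and $v \in A_L$ with $N_A(v) \neq 0$; I must show $\alpha \cup (N_A(v)) = 0$. If $v$ lies in $L\cdot 1$ this is immediate, since homogeneity gives $N_A(v) = v^p$ and $(v^p) = 0$ in $k_1(L)$. Otherwise put $E = L[v]$ and first treat the case in which $E$ is a field, the general case reducing to this by splitting $L[v]$ into its local factors and treating each separately. The key point is that the subalgebra of $A_E$ generated over $E$ by $v \otimes 1$ is isomorphic to $E \otimes_L E$, which has zero divisors because $[E:L] > 1$; hence $A_E$ is not principally division, and adaptedness yields $\res_{E/L}(\alpha) = \alpha_E = 0$.

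To finish I would feed this into the projection formula for the norm (corestriction) map $N_{E/L}$ on Milnor $K$-theory. Since the transfer on $k_1$ is induced by the field norm and $N_A(v)$ equals $N_{E/L}(\theta)$ up to a $p$-th power (where $\theta \in E$ is the image of $v$), we have $(N_A(v)) = N_{E/L}\bigl((\theta)\bigr)$ in $k_1(L)$, and therefore
\[
\alpha \cup (N_A(v)) = \alpha \cup N_{E/L}\bigl((\theta)\bigr) = N_{E/L}\bigl(\res_{E/L}(\alpha)\cup(\theta)\bigr) = N_{E/L}(0) = 0.
\]
I expect the main obstacle to be the interface with the appendix rather than the $K$-theoretic formalism: the whole argument rests on knowing that the reduced norm form of a strictly power associative algebra detects invertibility inside each $L[a]$ and restricts to a power of the field norm on a field $L[a]$. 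Granting these, the remaining work is the routine reduction of the neutralizing statement to the case where $L[v]$ is a field, carried out by decomposing $L[v]$ as a product of local algebras and applying the projection formula factor by factor.
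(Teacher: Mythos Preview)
Your argument is correct and, for the neutralizing half, takes a cleaner route than the paper. Both proofs share the same core observation: when $E = L[v]$ is a nontrivial field extension, the $E$-subalgebra $E[v\otimes 1] \cong E\otimes_L E$ of $A_E$ has zerodivisors, so $A_E$ is not principally division and adaptedness forces $\alpha_E = 0$. From there the paper passes to a prime-to-$p$ extension to make $E/L$ Kummer, invokes the exact sequence of \cite{LMS:GMSGC} to write $\alpha = \beta\cup(b)$, and then kills $(b,N_A(v))$ because $N_A(v)$ is a norm from $E$. You instead apply the projection formula for the transfer in Milnor $K$-theory directly: $\alpha\cup N_{E/L}((\theta)) = N_{E/L}(\alpha_E\cup(\theta)) = 0$. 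This is more economical and avoids both the Kummer reduction and the cited exact sequence; what the paper's detour buys is only that it stays closer to the symbol-level manipulations used elsewhere in the article.

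One simplification you should make: your proposed reduction to the case where $L[v]$ is a field by ``decomposing $L[v]$ as a product of local algebras'' is unnecessary. If $L[v]$ is not a field then it is not a division algebra, so $A_L$ already fails to be principally division and adaptedness gives $\alpha_L = 0$ outright; there is nothing further to check. (In fact, since $p$ is prime and $\chi_v$ has degree $p$, the only possibilities for $v\notin L$ with $N_A(v)\neq 0$ are that $L[v]$ is a degree-$p$ field or that $L[v]$ has zerodivisors, and the latter case is trivial.) Your splitting argument is essentially identical to the paper's, phrased via the invertibility criterion rather than via irreducibility of $\chi_v$.
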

\begin{proof}
Let $d = N_A(x)$ for $x \in A$. We claim that $\alpha \cup (d)
= 0$. If $x \in F \subset A$ then $N_A(x) = x^p = d$ and consequently
$\alpha \cup (d) = p \alpha \cup (x) = 0$. On the other hand, if $x
\in A \setminus F$ then $F(x)$ is a degree $p$ subfield of $A$ which
by assumption concerning the characteristic of $F$ is separable. After
passing to a prime-to-$p$ extension, which will not affect the
triviality of $\alpha \cup (d)$ due to the standard
restriction-corestriction argument, we may in fact assume that
$F(x)/F$ is a cyclic Kummer extension, say $E = F(x) = F(y)$ where $y \in
A$ with $y^d = b \in F$. 

Now, $A_E$ contains $E \otimes E$ which contains zerodivisors. Since
$E/F$ is separable, $E$ is principally generated over $F$, say $E =
F[a]$, and hence $E \otimes E = E[a \otimes 1]$ is also principally
generated.  It follows that $A_E$ is not principally division. Since
$N_A$ is adapted to $\alpha$, this implies in turn that $\alpha_E$ is
split. But now, by \cite[Theorem~5]{LMS:GMSGC} (see also
\cite[Proposition~5.2]{Voe:Z2}), we have the exact sequence:
\[\xymatrix{
	k_{n-1}(E) \ar[r]^{N_{E/F}} & k_{n-1}(F) \ar[r]^{\cup (b)} &
	k_{n}(F) \ar[r] & k_{n}(E)
}\]
which tells us that $\alpha = \beta \cup (b)$ for some $\beta \in
k_{n-1}(F)$. But now, we claim that $\alpha \cup (d) = 0$. In fact
this will come from the fact that $(b) \cup (d) = (b, d) = 0$. Again,
by the exact sequence above, this amounts to saying that $d \in
N_{E/F}(k_1(E)) = N_{E/F}(E^*)$. But $d = N_{E/F}(x), x \in E^*$ by
hypothesis.

Next, suppose that $N_A(x) = 0$, we need to show that $\alpha = 0$ in
this case. We will do this by showing that $A$ is not principally
division, which would imply $\alpha = 0$ by our hypothesis on $A$. 

Arguing by contradiction, suppose $A$ is principally division. Since
$N_A|_F$ is the $p$'th power operation on $F$, and $F$ is a field, it
follows that $x \in A \setminus F$ and hence $F(x) = E$ is a degree
$p$ separable field extension. Since $x$ satisfies its characteristic
polynomial $\chi_x(T)$ of degree $p$, it follows that $\chi_x(T)$ is
in fact the minimal polynomial of $x$ as an element of $E$. Since $E$
is a field, $\chi_x(T)$ must be irreducible. But, since $N_A(x) = 0$
is the constant coefficient of this polynomial, we find that
$\chi_x(T) = T f(T)$ for some monic polynomial $f(T)$ contradicting
the irreducibility of $\chi_x(T)$.
\end{proof}

\begin{ex}[Kummer extensions]
If $L = F(\sqrt[p]{a})$. Then $L$ is adapted to $(a)$. This follows
from the fact that $L/F$ is a division algebra exactly when $a \not\in
(F^*)^p$, which in turn happens exactly when $(a) \neq 0$ in
$k_1(F^*)$.
\end{ex}

\begin{ex}[Symbol algebras] \label{csa norm}
Assume $\mu_p \subset F$, and let $\alpha = (a, b) \in k_2(F) =
\Br(F)_p$. Let $D = D_{(a, b)}$ be the corresponding symbol algebra.
Then $D$ is adapted to $\alpha$. Indeed, since $D$ has degree $p$, it
is either a split algebra or a division algebra. Further, it is split
if and only if $(a, b) = 0 \in k_2(F) = \Br(F)_p$, and it is division
if and only if its norm form is anisotropic, and as we see in
Lemma~\ref{principal division}, it is division if and only if it is principally
division.
\end{ex}

\begin{ex}[First Tits process Albert algebras] \label{albert
norm}
Consider the case $p = $, suppose that $\mu_3 \in F$ and let $D =
D_{(a, b)}$ be the symbol algebra as before. Choose $c \in F^*$ and
let $A = D_0 \oplus D_1 \oplus D_2$ with $D_i = D$ be the Albert
algebra given by the first Tits process (see, for example
\cite[section~2.5]{PeRa}, or \cite[Section IX.39]{BofInv}).

We claim that $A$ is adapted to $(a, b, c)$. This follows from
\cite[Theorem~1.8 and Proposition~2.6]{PeRa}, where it is shown that
$A$ is division if and only if $(a, b, c) \neq 0$, and
Lemma~\ref{albert principal}, where we show that $A$ is principally
division if and only if it is division.
\end{ex}

We will have use of the following identity in $k_2(F)$
\begin{lem} \label{symbol identity}
Suppose $a, b \in F^*$ with $a + b \neq 0$. Then $(a, b) = (-ab^{-1},
a + b) = (a + b, -ba^{-1})$.
\end{lem}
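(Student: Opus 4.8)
The plan is to reduce everything to the Steinberg relation $(x, 1-x) = 0$ in $k_2(F)$, together with the standard identities $(x,y) = -(y,x)$ and $(x,x) = (x,-1)$ that hold in mod-$p$ Milnor K-theory. Since $a, b \in F^*$ and $a + b \neq 0$, the element $x = a/(a+b)$ is defined, nonzero, and unequal to $1$, and satisfies $1 - x = b/(a+b)$. The Steinberg relation therefore gives
\[
\left(\frac{a}{a+b},\ \frac{b}{a+b}\right) = 0 \in k_2(F).
\]

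Next I would expand this vanishing symbol using bilinearity, writing $a/(a+b) = a(a+b)^{-1}$ and $b/(a+b) = b(a+b)^{-1}$. This produces four terms: $(a,b)$, $-(a, a+b)$, $-(a+b, b)$, and the diagonal term $((a+b)^{-1}, (a+b)^{-1})$. The diagonal term is simplified using $(c^{-1},c^{-1}) = (c,c) = (c,-1)$ with $c = a+b$. Collecting the remaining terms and folding the second slots together by bilinearity and antisymmetry yields the identity
\[
(a,b) = (a+b,\ -b a^{-1}),
\]
which is precisely the second asserted equality.

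To obtain the first equality, I would invoke the symmetry of the hypothesis: the relation just proved holds for any pair of nonzero elements with nonzero sum, so swapping the roles of $a$ and $b$ gives $(b,a) = (a+b, -a b^{-1})$. Applying antisymmetry $(b,a) = -(a,b)$ on the left and $-(a+b, -ab^{-1}) = (-ab^{-1}, a+b)$ on the right then produces $(a,b) = (-a b^{-1},\ a+b)$, completing the proof.

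I do not expect any genuine obstacle here: the argument is entirely formal once the correct auxiliary element $a/(a+b)$ is chosen, and the only point requiring care is the sign bookkeeping in the expansion, in particular the correct simplification of the diagonal symbol $((a+b)^{-1},(a+b)^{-1})$. The derivation is robust to that computation, so the main task is simply to carry out the bilinear expansion cleanly.
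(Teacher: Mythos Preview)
Your argument is correct and in fact cleaner than the paper's own proof. The paper passes to a prime-to-$p$ extension to assume $\mu_p\subset F$, invokes the Merkurjev--Suslin isomorphism $k_2(F)\cong\Br(F)_p$, and then exhibits an explicit isomorphism of symbol algebras $D_{(a,b)}\cong D_{(-ab^{-1},\,a+b)}$ by writing down new generators $z=x+y$, $w=-xy^{-1}$ and checking the relations. Your approach instead stays entirely inside Milnor $K$-theory: the Steinberg relation applied to $a/(a+b)$, bilinearity, antisymmetry, and $(c,c)=(c,-1)$ suffice. This is strictly more elementary---it needs no norm-residue isomorphism, no root-of-unity reduction, and indeed establishes the identity already in $K_2^M(F)$ before reducing modulo $p$. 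The paper's route, on the other hand, makes the identity tangible at the level of algebras, which fits the paper's broader theme of realizing $K$-theoretic relations via norm forms; but as a proof of the lemma itself, your method is the more direct one. Your bookkeeping checks out: the diagonal term contributes $(a+b,-1)$, and combining it with $-(a+b,a)+(a+b,b)$ gives exactly $(a+b,-ba^{-1})$.
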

\begin{proof}
Since passing to a prime-to-$p$ extension constitutes an injective map
on the level of $k_2$, we may assume that $\mu_p \subset F$, and in
particular, we may identify $k_2(F) = \Br(F)_p$ by the
Merkurjev-Suslin Theorem. In particular, it suffices to show that
$D_{(a, b)} \cong D_{(a + b, -ab^{-1})}$. We do this as follows:
presenting $D_{(a, b)}$ as generated by $x, y$ with $x^p = a, y^p = b,
yx = \rho xy$, we let $z = x + y$ and $w = -xy^{-1}$, we see $z^p =
x^p + y^p = a + b$,
$w^p=\N_{F[x]/F}(-x)b^{-1}=(-1)^p(-1)^{p-1}ab^{-1}=-ab^{-1}$, and 
\begin{multline*}
zw = -(x + y)xy^{-1} = -x^2y^{-1}t - yxy^{-1} = -x^2 y^{-1} - \rho
x \\ = -x(x y^{-1} + \rho y^{-1} y) = -x(\rho y^{-1} x + \rho
y^{-1}y) = -\rho xy^{-1}(x + y) = \rho w z
\end{multline*}
Hence $w, z$ generate the algebra $D_{(-ab^{-1}, a + b)}$ which
therefore must be isomorphic to $D_{(a, b)}$ as desired.
\end{proof}

\begin{prop} \label{norm doubling}
Suppose that $\alpha = \beta \cup (a)$, and that $N$ is a degree $p$
form defined on a vector space $V$ which neutralizes and splits
$\beta$. Then then the form $N' = N \oplus -aN$ on $V \oplus V$ splits
$\alpha$ and neutralizes $\alpha$. 
\end{prop}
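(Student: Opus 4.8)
The plan is to verify the two assertions—that $N'$ neutralizes $\alpha$ and that $N'$ splits $\alpha$—separately, in each case writing $N'(v,w) = N(v) - aN(w)$ and running a short case analysis on the scalars $d = N(v)$ and $e = N(w)$, exploiting that $N$ already neutralizes and splits $\beta$. The recurring mechanism is the following dichotomy applied to any vector $u$: if $N(u) \neq 0$ then neutralization of $\beta$ gives $\beta \cup (N(u)) = 0$, whereas if $u \neq 0$ but $N(u) = 0$ then $N_L$ is isotropic, so splitting of $\beta$ forces $\beta_L = 0$ and hence $\alpha_L = \beta_L \cup (a) = 0$. Almost everything reduces to these two observations together with the graded-commutativity of symbols and the Steinberg relations.

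I would treat splitting first, as it is the cleaner of the two. Suppose $N'_L$ is isotropic, witnessed by some $(v,w)\neq(0,0)$ with $N(v) = aN(w)$. If $N(w) = 0$ then $N(v) = 0$ as well, and since $(v,w)$ is nonzero one of $v,w$ is a nonzero isotropic vector for $N_L$, giving $\beta_L = 0$ and thus $\alpha_L = 0$. Otherwise $d = N(v)$ and $e = N(w)$ are both nonzero with $a = de^{-1}$, so in $k_1(L)$ we have $(a) = (d) - (e)$; hence $\alpha_L = \beta_L \cup (a) = \beta_L\cup(d) - \beta_L\cup(e)$, and both terms vanish by neutralization of $\beta$.

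For neutralization I must show $\alpha \cup (N'(v,w)) = \beta \cup (a) \cup (d - ae) = 0$ whenever $d - ae \neq 0$. The degenerate configurations go quickly: if some nonzero vector among $v,w$ is isotropic for $N$ then $\beta_L = 0$ and there is nothing to prove; if $w = 0$ the symbol is $\beta\cup(a)\cup(d)$, which vanishes after reordering because $\beta\cup(d) = 0$; and if $v = 0$ the Steinberg relation $(a)\cup(-a) = 0$ collapses $(a)\cup(-ae)$ to $(a)\cup(e)$, again killed by $\beta\cup(e) = 0$. The substantive case is $d,e \neq 0$: here I would set $t = aed^{-1}$, write $d - ae = d(1-t)$, and use the Steinberg relation $(t)\cup(1-t) = 0$ together with $(t) = (a) + (e) - (d)$ to express $(a)\cup(1-t)$ in terms of $(d)\cup(1-t)$ and $(e)\cup(1-t)$. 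This rewrites $(a)\cup(d - ae)$ as a combination of $(a)\cup(d)$, $(d)\cup(1-t)$, and $(e)\cup(1-t)$, each of which dies once cupped with $\beta$ on account of $\beta\cup(d) = 0 = \beta\cup(e)$.

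I expect the main obstacle to be precisely this last computation: choosing the Steinberg substitution $t = ae/d$ so that $(a)\cup(d-ae)$ becomes a sum of symbols in each of which either $(d)$ or $(e)$ sits in a position that annihilates against $\beta$. The only remaining care is the bookkeeping of graded-commutativity signs when moving a degree-one symbol past $\beta$, but since these signs are units modulo $p$ they do not affect vanishing; note also that no reduction to the case $\mu_p \subset F$ is required, since every relation invoked holds in the Milnor K-theory of an arbitrary field.
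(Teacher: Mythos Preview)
Your argument is correct. The splitting half is essentially identical to the paper's. For neutralization, the paper takes a slightly different route: rather than your Steinberg substitution $t = ae/d$, it first uses $\beta\cup(d)=\beta\cup(e)=0$ to replace $(a)$ by $(aed^{-1})$ and then applies its Lemma~\ref{symbol identity}, namely $(x,y)=(-xy^{-1},x+y)$, to rewrite $(aed^{-1},\,d-ae)$ as $(-ae,d)$, which dies against $\beta$ via $\beta\cup(d)=0$. That lemma is proved in the paper by adjoining $\mu_p$ and computing inside the symbol algebra $D_{(a,b)}$. Your approach trades this for a bare-hands Steinberg computation entirely within Milnor $K$-theory, so you avoid both the passage to $\mu_p\subset F$ and the appeal to symbol algebras; you are also more careful than the paper about the boundary cases $v=0$ or $w=0$, where ``$N(v)=0$'' does not literally witness isotropy. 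Conversely, the paper's packaging via Lemma~\ref{symbol identity} is reusable elsewhere (e.g.\ in Theorem~\ref{milnor serre} and the closing remark), so isolating it has its own payoff.
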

\begin{proof}
Suppose that $v, w \in V$. We first show that $N'$ splits
$\alpha$. Suppose that we have $v, w \in V$ such that $N(v) -
aN(w) = 0$. We must show in this case that $\alpha = 0$. Note that we
may assume $N(v), N(w) \neq 0$ since otherwise, using the fact that
$N$ splits $\beta$, we would have $\beta = 0$ and hence
$\alpha = 0$ as well. But now we may write
\[\alpha = \beta \cup (a) = \beta \cup (N(w)) + \beta \cup (a) = \beta
\cup (aN(w)) = \beta \cup (N(v)) = 0\]

Now, to see that $N$ neutralizes $\alpha$, let $v, w \in V$. We
must show that $\alpha \cup (N(v) - aN(w)) = 0$. If either $N(v)$ or
$N(w)$ are zero, then $\beta = 0$ since $N$ splits
$\beta$, and hence $\alpha \cup (N(v) - aN(w)) = 0$ as well. Hence we
may assume that $N(w), N(v) \neq 0$. Then we have:
\begin{align*}
\alpha \cup (N(v) - aN(w)) &= \beta \cup (a) \cup (N(v) - aN(w)) \\
&= \beta \cup (N(w) N(v)^{-1} a) \cup (N(v) - aN(w)) \\
&= \beta \cup \big(-(-aN(w))N(v)^{-1}, N(v) - aN(w)\big)
\end{align*}
By Lemma~\ref{symbol identity}, we may then write
\begin{multline*}
\beta \cup
\big(-(-aN(w))N(v)^{-1}, N(v) - aN(w)\big)
 = \beta \cup (-aN(w), N(v)) \\=
\beta \cup (N(v)) \cup (-a^{-1} N(w)^{-1})
\end{multline*}
But $\beta \cup (N(v)) = 0$ since $N$ neutralizes $\beta$, hence
$\alpha \cup (N(v) - aN(w)) = 0$ as desired.
\end{proof}

Of course, in the case $p = 2$, the Pfister forms give examples of
forms which neutralize and split symbols. The contents of the
following Proposition were noted by Serre (see \cite[page
99]{Serre:CG}):
\begin{prop} \label{pfister splitting}
Suppose that $p = 2$ and $\alpha = (a_1, \ldots, a_n) \in k_n(F)$ is a
symbol. Let $q = \lbb a_1, \ldots, a_n\rbb$ be the corresponding
Pfister form. Then $q$ splits and neutralizes $\alpha$.
\end{prop}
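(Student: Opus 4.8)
The plan is to prove the statement by induction on $n$, exploiting the recursive structure of Pfister forms together with the doubling construction of Proposition~\ref{norm doubling}. The key observation is that for $p = 2$ a Pfister form is precisely a quadratic (that is, degree $2$) form, so Proposition~\ref{norm doubling} applies directly; moreover the $n$-fold Pfister form is built from the $(n-1)$-fold one by exactly the operation $N \mapsto N \oplus -aN$ appearing there. Thus I expect the whole argument to stay self-contained within the excerpt, with no appeal to outside machinery.

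I would take the base case to be $n = 1$. Here $q = \lbb a_1\rbb = \langle 1, -a_1\rangle$ is the norm form of the Kummer extension $F(\sqrt{a_1})$: its value at $(x,y)$ is $x^2 - a_1 y^2 = \N_{F(\sqrt{a_1})/F}(x + y\sqrt{a_1})$. If a nonzero value $d$ occurs, then $d$ is a norm from $F(\sqrt{a_1})$, whence $(a_1) \cup (d) = (a_1, d) = 0$, giving neutralization; and if $q$ becomes isotropic over some $L$, then $a_1$ becomes a square in $L$, so $(a_1)_L = 0$, giving splitting. Equivalently, this base case is exactly the Kummer extension example together with the first Lemma of this section, applied to the degree $2$ algebra $F(\sqrt{a_1})$.

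For the inductive step, suppose the result holds for $(n-1)$-fold Pfister forms and write $\alpha = \beta \cup (a_n)$ with $\beta = (a_1, \ldots, a_{n-1})$ and $N = \lbb a_1, \ldots, a_{n-1}\rbb$. By the inductive hypothesis $N$ neutralizes and splits $\beta$. Using the standard decomposition $\lbb a_1, \ldots, a_n\rbb = \lbb a_1, \ldots, a_{n-1}\rbb \otimes \langle 1, -a_n\rangle$, the Pfister form $q$ is, as a form on $V \oplus V$, exactly $N \oplus (-a_n)N$, whose value at $(v,w)$ is $N(v) - a_n N(w)$. Proposition~\ref{norm doubling}, applied with $a = a_n$, then yields immediately that $q = N \oplus -a_n N$ neutralizes and splits $\alpha = \beta \cup (a_n)$, completing the induction.

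In this route there is essentially no hard step: the genuine mathematical content has been packaged into Proposition~\ref{norm doubling} (already established), and the only points demanding care are the convention $\lbb a\rbb = \langle 1, -a\rangle$ and the verification that the value function of $N \oplus -a_n N$ matches the hypothesis $N(v) - aN(w)$ of that proposition. I would contrast this with the more conceptual but heavier alternative of arguing directly in cohomology, where the real difficulty would lie: there one invokes the Milnor conjecture to translate ``$q$ hyperbolic'' into ``$\alpha = 0$'', uses that Pfister forms are round (so $d\,q \cong q$ for each represented value $d$) to see that $\lbb a_1, \ldots, a_n, d\rbb$ is hyperbolic, and reads off $\alpha \cup (d) = 0$. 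The inductive argument via Proposition~\ref{norm doubling} bypasses this machinery entirely.
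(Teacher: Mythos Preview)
Your proof is correct and takes a genuinely different route from the paper's. The paper argues directly via the Milnor conjecture: it uses the isomorphism $e_n \colon I^n/I^{n+1} \to k_n$ to identify $\alpha$ with the class of $q$, then invokes the classical facts that an isotropic Pfister form is hyperbolic (for splitting) and that Pfister forms represent $1$ (for neutralizing, since then $q \perp -q(v)\,q$ is visibly isotropic, hence hyperbolic, and so $\alpha \cup (q(v)) = e_{n+1}(q \otimes \langle 1, -q(v)\rangle) = 0$). In other words, the ``more conceptual but heavier alternative'' you describe at the end is precisely the route the paper takes. What the paper's approach buys is a one-shot argument with no induction and a transparent link to the classical quadratic form picture; what your approach buys is independence from the Milnor conjecture at this stage and a uniform treatment that exhibits the Pfister case as just another instance of the doubling mechanism of Proposition~\ref{norm doubling}, rather than as a separate phenomenon requiring its own toolkit.
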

\begin{proof}
By the Milnor conjectures, we have an isomorphism $e_n :
I^n(F)/I^{n+1}(F) \to k_n(F)$ sending $q$ to $\alpha$. If $q$ is
isotropic, then since it is a Pfister form, it must be split and in
particular represent the $0$ class in $I^n(F)/I^{n+1}(F)$. It then
follows that $\alpha = e_n(q)$ is zero as well.

To see that $q$ neutralizes $\alpha$, suppose that $a = q(v)$, and
consider $\alpha \cup (a) = e_{n+1}(q \otimes \left<1, -a\right>)$. The form $q
\otimes \left<1, -a\right> = q \perp -q(v) q$ is isotropic. This is because $q$,
being a Pfister form, represents $1$, say $q(w) = 1$, and then we may
write
\[(q \perp -q(v) q) (v, w) = q(v) - q(v)q(w)  = 0.\]
But as before, it follows that since $q \otimes \left<1, -a\right>$ is a Pfister
form, it must in fact be hyperbolic implying that $\alpha \cup (a) =
e_{n+1}(q \otimes \left<1, -a\right>) = 0$ as desired.
\end{proof}

Let us now record some applications of these results. The first few of
which are well known consequences of previous results in the area.

\begin{thm} \label{milnor serre}
Suppose that $F$ is $C_n$, and let $p = 2$. Then every element of
$k_n(F) = K_n^M(F)/2$ is a symbol and $k_{n+1}(F) = 0$. In particular,
$\cdim_2(F) \leq n$.
\end{thm}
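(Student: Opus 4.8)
The plan is to translate everything into quadratic forms via the Milnor isomorphism $e_n\colon I^n(F)/I^{n+1}(F)\to k_n(F)$ (available since $p=2$ and $\mathrm{char}\,F\neq 2$), under which $n$-fold Pfister forms correspond precisely to symbols, and then to feed in the one quantitative consequence of the $C_n$ hypothesis that I use repeatedly: every quadratic form over $F$ of dimension exceeding $2^n$ is isotropic. I would first settle $k_{n+1}(F)=0$. Since $k_{n+1}(F)$ is generated by symbols, it suffices to kill a single symbol $\alpha=(a_1,\ldots,a_{n+1})$; its associated $(n+1)$-fold Pfister form $\lbb a_1,\ldots,a_{n+1}\rbb$ has dimension $2^{n+1}>2^n$, hence is isotropic, and an isotropic Pfister form is hyperbolic, so that $\alpha=e_{n+1}(\lbb a_1,\ldots,a_{n+1}\rbb)=0$.

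For the assertion that every element of $k_n(F)$ is a symbol, I would show that a sum of two symbols is again a symbol and then induct on the number of symbols in a representation. Let $\sigma_1,\sigma_2\in k_n(F)$ be symbols, with corresponding $n$-fold Pfister forms $\phi_1,\phi_2$; if either is isotropic it is hyperbolic, the symbol vanishes, and there is nothing to do, so assume both are anisotropic. The form $\phi_1\perp-\phi_2$ has dimension $2^{n+1}$, so by the $C_n$ hypothesis its anisotropic part has dimension at most $2^n$, i.e.\ its Witt index is at least $2^{n-1}$. By the linkage theorem of Elman and Lam this forces $\phi_1$ and $\phi_2$ to be linked, so that they share a common $(n-1)$-fold Pfister factor $\tau$, say $\phi_1\cong\tau\otimes\lbb a\rbb$ and $\phi_2\cong\tau\otimes\lbb b\rbb$. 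A direct computation in the Witt ring, using that $\tau\perp-\tau$ is hyperbolic and that scaling a Pfister form by a unit does not change its class modulo $I^{n+1}(F)$, gives $\phi_1-\phi_2\equiv\tau\otimes\lbb c\rbb\pmod{I^{n+1}(F)}$ for a suitable $c\in F^*$, an $n$-fold Pfister form. Since $2\phi_2\in I^{n+1}(F)$ we have $\sigma_1+\sigma_2=e_n(\phi_1+\phi_2)=e_n(\phi_1-\phi_2)$, which is the symbol attached to $\tau\otimes\lbb c\rbb$; this completes the induction.

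Finally, for the cohomological bound I would use that a finite extension $L/F$ of a $C_n$ field is again $C_n$, so the vanishing just proved applies over every such $L$ and yields $k_{n+1}(L)=0$ for all finite $L/F$. Under the Bloch--Kato identification $k_{n+1}(L)\cong H^{n+1}(L,\mathbb Z/2\mathbb Z)$ this says $H^{n+1}(L,\mathbb Z/2\mathbb Z)=0$ for every finite $L/F$, which by the standard criterion for cohomological dimension gives $\cdim_2(F)\le n$. I expect the middle step to be the main obstacle: passing from the crude dimension bound on the anisotropic part of $\phi_1\perp-\phi_2$ to the structural statement that $\phi_1$ and $\phi_2$ are \emph{linked} is precisely where the Elman--Lam theorem enters, and checking that the Witt-index threshold $2^{n-1}$ is exactly what the $C_n$ bound supplies---rather than merely a common represented value---is the delicate point.
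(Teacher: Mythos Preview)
Your argument is correct. The vanishing of $k_{n+1}(F)$ and the deduction of $\cdim_2(F)\le n$ are handled exactly as in the paper. The difference lies in how you prove that every element of $k_n(F)$ is a symbol.

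The paper argues slot by slot: given two symbols sharing $m<n-1$ entries, it takes the $2^{n-1}$-dimensional Pfister forms neutralizing their length-$(n-1)$ truncations (Proposition~\ref{pfister splitting}), assembles a degree-$2$ form of dimension $2^n+1$, uses the $C_n$ hypothesis to make it isotropic, and then rewrites the symbols via the identity of Lemma~\ref{symbol identity} so that they share $m+1$ entries. Iterating yields $(n-1)$ common slots, after which the sum is visibly a symbol. You instead go straight to the endpoint: the $C_n$ bound forces the Witt index of $\phi_1\perp-\phi_2$ to be at least $2^{n-1}$, and the Elman--Lam linkage theorem then gives $(n-1)$-linkage in one stroke; the sum of the two symbols is then $(t_1,\ldots,t_{n-1},ab)$ if $\phi_i=\lbb t_1,\ldots,t_{n-1}\rbb\otimes\lbb a\rbb,\lbb b\rbb$. (Your Witt-ring computation is fine, but this Milnor-side observation makes the ``suitable $c$'' explicit.)

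Both routes prove the same linkage statement. Yours is shorter and conceptually clean, at the cost of importing Elman--Lam as a black box. The paper's route is self-contained and, more to the point, is a warm-up for its general method: the machinery of forms that \emph{neutralize} and \emph{split} a class, together with Proposition~\ref{norm doubling}, is what drives the odd-$p$ results later, and the $p=2$ case is presented so as to exhibit that machinery rather than to optimize the proof.
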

\begin{proof}
For the first claim, suppose that $\alpha = (a_1, \ldots, a_n)$ and
$\beta = (b_1, \ldots, b_n)$ are symbols in $k_n(F)$. It suffices to
show that we may rewrite $\alpha$ and $\beta$ so that $a_i = b_i$ for
$i = 1, \ldots, n-1$. We do this by induction on the number of slots
that a presentation of $\alpha$ and $\beta$ share. Suppose that a
given presentation of $\alpha$ and $\beta$ share $m < n-1$ slots.
Without loss of generality, we may suppose that we have $a_i = b_i$
for $i = 1, \ldots, m$ for $m < n-1$. By Proposition~\ref{pfister
splitting}, we may find forms $F, G$ of degree $2$ on
$2^{n-1}$-dimensional vector spaces $V, W$ which split and neutralize
$(a_1, \ldots, a_{n-1})$ and $(b_1, \ldots, b_{n-1})$ respectively. Let
$\phi$ be the one dimensional form $t^2 a_{n-1} \in F[t]$. By the
$C_n$ property, the $2^n + 1$-dimensional form $\phi \oplus F \oplus
G$ has a nontrivial zero, say $a_{n-1} t_0^2 + F(v)a_n - G(w)b_n = 0$
for some $t_0 \in F$, $v \in V$ and $w \in W$.  If either $F(v)$ or
$G(w)$ is zero, then one of the forms $\alpha$ or $\beta$ is trivial
(since these forms split $\alpha$ and $\beta$), and we are done. Hence
we may assume that $F(v), G(w) \neq 0$. But now, we have
\begin{equation} \label{first}
(a_1, \ldots, a_n) = (a_1, \ldots, F(v) a_n) = (a_1, \ldots, t_0^2
a_{n-1}, F(v) a_n)
\end{equation}
If $t_0^2 a_{n-1} + F(v) a_n = 0$, it would follow by the definition
of Milnor K-theory that $(t_0^2 a_{n-1}, F(v) a_n) = 0$ and hence
$\alpha = 0$. Hence we may assume that this is not the case. By
Lemma~\ref{symbol identity}, we may therefore write 
\begin{multline*}
(t_0^2 a_{n-1}, F(v) a_n) = (-t_0^2 a_{n-1} (F(v) a_n)^{-1}, t_0^2
a_{n-1} + F(v) a_n) \\ = (-t_0^2 a_{n-1} (F(v) a_n)^{-1}, G(w) b_n)
\end{multline*}
Combining this with equation~(\ref{first}), it follows that we may
write
\[\alpha = (a_1, \ldots, a_n) = (a_1, \ldots, -t_0^2 a_{n-1} (F(v) a_n)^{-1},
G(w) b_n)\]
and writing $\beta = (b_1, \ldots, b_n) = (b_1, \ldots, b_{n-1}, G(w)
b_n)$, we see that our two new presentations now have $m+1$
slots in common. The claim follows.

For the second assertion, it suffices to show that every symbol
$\alpha = (a_1, \ldots, a_{n+1})$ is trivial in $k_{n+1}(F)$. But via
the Milnor conjectures, we may identify $\alpha$ as the $e_{n+1}$
invariant of the $(n+1)$-fold Pfister form $\lbb a_1, \ldots, a_{n+1}
\rbb$, which has dimension $2^{n+1}$. Since $F$ is $C_n$, it follows
that this Pfister form is isotropic and hence hyperbolic. Consequently
$\alpha$, its $e_{n+1}$-invariant, must vanish.
\end{proof}

We next recall a consequence of the work of Merkurjev and Suslin:
\begin{thm} \label{merksus}
Let $F$ be $C_2$ then $k_3(F) = K_3^M/p = 0$, and so $\cdim(F) \leq 2$.
\end{thm}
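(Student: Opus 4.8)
The plan is to prove the stronger statement that $k_3(L)=0$ for every finite extension $L/F$, and then read off $\cdim(F)\le 2$ exactly as in the last line of Theorem~\ref{milnor serre}. Since $K_3^M(F)/p$ is generated by symbols, it suffices to show that an arbitrary symbol $(a,b,c)\in k_3(F)$ vanishes. The input from Merkurjev--Suslin enters through Example~\ref{csa norm}: it provides the identification $k_2=\Br_p$ that makes the symbol algebra $D_{(a,b)}$ adapted to $(a,b)$. Because $D_{(a,b)}$ is only defined once $\mu_p\subset F$, I would first replace $F$ by $F(\mu_p)$. As $[F(\mu_p):F]$ divides $p-1$ and is thus prime to $p$, the restriction--corestriction argument already used in the proof of the first lemma shows $k_3(F)\hookrightarrow k_3(F(\mu_p))$, so it is enough to kill $(a,b,c)$ there; and since a finite extension of a $C_2$ field is again $C_2$, the field $F(\mu_p)$ is still $C_2$. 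Hence I may assume $\mu_p\subset F$.

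Now form $D=D_{(a,b)}$, a $p^2$-dimensional algebra whose reduced norm $N_D$ is a form of degree $p$. By Example~\ref{csa norm} together with the lemma that the reduced norm of an algebra adapted to a class neutralizes and splits that class, $N_D$ neutralizes and splits $(a,b)$. Writing $\alpha=(a,b,c)=(a,b)\cup(c)$ and applying Proposition~\ref{norm doubling} to this factorization, I obtain the form $N'=N_D\oplus -c\,N_D$ on $D\oplus D$, of degree $p$ in $2p^2$ variables, which neutralizes and splits $\alpha$. The hypothesis now does the work: $N'$ is a degree-$p$ form in $2p^2>p^2$ variables, so the $C_2$ property forces it to be isotropic, and since $N'$ splits $\alpha$ this yields $\alpha=0$. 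As the symbol was arbitrary, $k_3(F)=0$, and the identical argument applied over each finite $L/F$ (again $C_2$) gives $k_3(L)=0$ for all such $L$; via the Bloch--Kato identification $k_3=H^3(\,\cdot\,,\mu_p^{\otimes 3})$ this is the vanishing needed to conclude $\cdim(F)\le 2$.

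The one genuinely quantitative step is the inequality $2p^2>p^2$: doubling the $p^2$-dimensional norm form is precisely what pushes the number of variables past the $C_2$ threshold of $p^2$, so that the hypothesis applies. Everything else is assembly of the neutralize/split machinery of the previous results plus the routine prime-to-$p$ and $C_i$-descent bookkeeping. Accordingly, I expect the only delicate points to be the reduction to $\mu_p\subset F$ (ensuring $C_2$ is preserved and that the class is not lost under restriction) rather than any new isotropy or cohomological estimate.
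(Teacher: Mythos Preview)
Your proposal is correct and follows the same route as the paper: take the reduced norm $N_D$ of the symbol algebra $D_{(a,b)}$, which by Example~\ref{csa norm} neutralizes and splits $(a,b)$, apply Proposition~\ref{norm doubling} to obtain the degree-$p$ form $N_D\oplus -cN_D$ in $2p^2>p^2$ variables, and invoke the $C_2$ hypothesis to force isotropy and hence $\alpha=0$. You spell out two points the paper leaves implicit---the prime-to-$p$ reduction to $\mu_p\subset F$ (needed for $D_{(a,b)}$ to exist) and the passage to finite extensions for the $\cdim$ conclusion---but the core argument is identical.
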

\begin{proof}
Let $\alpha = (a, b, c) \in k_3(F)$. Let $D = D_{(a, b)}$ be the
symbol algebra associated to $(a, b)$, and let $N_D$ be its reduced
norm.  By Example~\ref{csa norm}, $N_D$ splits and neutralizes $(a,
b)$. Therefore, by Proposition~\ref{norm doubling}, the form $N = N_D
\oplus -c N_D$ splits and neutralizes $\alpha$. But note that this
form has $2p^2 > p^2$ and degree $p$. Since $F$ is $C_2$, it follows
that it is isotropic and therefore $\alpha = 0$ as desired.
\end{proof}

\begin{thm} \label{albert 1}
Let $F$ be $C_3$. Then $K_4^M/3 = 0$, and so $\cdim_3(F) \leq 3$.
\end{thm}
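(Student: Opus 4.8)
The plan is to follow the template of Theorem~\ref{merksus}, replacing the symbol algebra by a first Tits process Albert algebra so as to gain one more K-theoretic slot. Since Milnor K-theory is generated by symbols, it suffices to show that every symbol $\alpha = (a, b, c, d) \in k_4(F)$ vanishes. As the Albert algebra construction of Example~\ref{albert norm} requires a primitive cube root of unity, I would first reduce to the case $\mu_3 \subset F$: the extension $F(\mu_3)/F$ has degree dividing $2$, hence prime to $3$, so restriction $k_4(F) \to k_4(F(\mu_3))$ is injective (by restriction--corestriction, since multiplication by $[F(\mu_3):F]$ is invertible on the $\mbb{F}_3$-vector space $k_4$), and $F(\mu_3)$ is again $C_3$ because finite extensions of $C_3$ fields are $C_3$ by Tsen--Lang theory. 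Thus it is enough to prove $\alpha = 0$ after replacing $F$ by $F(\mu_3)$.

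Assuming now $\mu_3 \subset F$, I would let $D = D_{(a,b)}$ and form the first Tits process Albert algebra $A = D_0 \oplus D_1 \oplus D_2$ with parameter $c$, which by Example~\ref{albert norm} is adapted to $(a, b, c)$. By the Lemma relating adapted algebras to their reduced norms, $N_A$---a form of degree $p = 3$ on the $27$-dimensional space $A$---neutralizes and splits $(a, b, c)$. Writing $\alpha = (a, b, c) \cup (d) = \beta \cup (d)$ with $\beta = (a, b, c)$, Proposition~\ref{norm doubling} then produces the degree $3$ form $N = N_A \oplus -d\, N_A$ on $A \oplus A$, which splits and neutralizes $\alpha$.

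The form $N$ lives on a space of dimension $2 \cdot 27 = 54$, and $54 > 27 = 3^3$. Since $F$ is $C_3$ and $N$ has degree $3$ in more than $3^3$ variables, $N$ is isotropic; because $N$ splits $\alpha$, this forces $\alpha = 0$. Hence every symbol in $k_4(F)$ is trivial, so $k_4(F) = K_4^M(F)/3 = 0$. The same argument applies verbatim to each finite extension $L/F$, which is again $C_3$, giving $k_4(L) = H^4(L, \mu_3^{\otimes 4}) = 0$ for all finite $L/F$; passing to $L(\mu_3)$ and using the Bloch--Kato identification together with restriction--corestriction once more yields $H^4(L, \mbb{Z}/3\mbb{Z}) = 0$ for all finite $L/F$, whence $\cdim_3(F) \leq 3$, exactly as in the earlier theorems.

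The main obstacle---and the only genuinely new input beyond the $C_2$ case of Theorem~\ref{merksus}---is the existence of a degree-$3$ norm form that neutralizes and splits the $3$-symbol $(a, b, c)$; this is precisely what the adapted Albert algebra supplies through Example~\ref{albert norm}, and everything else is the same dimension-counting mechanism. The one point requiring care is the bookkeeping for the root-of-unity reduction, ensuring it preserves both the $C_3$ hypothesis and the detection of triviality, but this is standard.
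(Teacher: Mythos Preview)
Your proof is correct and follows the same approach as the paper: use the Albert algebra norm form $N_A$ (via Example~\ref{albert norm}) to neutralize and split $(a,b,c)$, apply Proposition~\ref{norm doubling} to obtain the $54$-dimensional degree-$3$ form $N_A \oplus -dN_A$, and invoke $C_3$ since $54 > 27$. You have in fact been more careful than the paper's terse proof by making explicit the prime-to-$3$ reduction to $\mu_3 \subset F$ (needed for Example~\ref{albert norm}) and the passage to finite extensions for the $\cdim_3$ conclusion.
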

\begin{proof}
If $A$ is the Albert algebra for $(a, b, c)$ then $N_A \oplus -d N_A$
is a $2 \cdot 3^3 = 54$ dimensional form splitting and neutralizing
$(a, b, c, d)$. since $54 > 27$ we are done.
\end{proof}

\begin{thm} \label{albert 2}
Let $F$ be $C_4$. Then $K_5^M/3 = 0$ and so $\cdim_4(F) \leq 4$.
\end{thm}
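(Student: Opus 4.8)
The plan is to iterate the norm-doubling construction of Proposition~\ref{norm doubling} exactly one step further than in the proof of Theorem~\ref{albert 1}. Work with $p = 3$ and fix a symbol $(a,b,c,d,e) \in k_5(F)$; it suffices to show that it vanishes, since symbols generate $k_5(F)$.

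First I would recall from the proof of Theorem~\ref{albert 1} that if $A$ denotes the first Tits process Albert algebra adapted to $(a,b,c)$ (Example~\ref{albert norm}), then the degree $3$ form $N_A \oplus -dN_A$ on a $2 \cdot 27 = 54$-dimensional space both splits and neutralizes the symbol $(a,b,c,d)$. Writing $(a,b,c,d,e) = (a,b,c,d) \cup (e)$, I would then apply Proposition~\ref{norm doubling} with $\beta = (a,b,c,d)$ and $N = N_A \oplus -dN_A$, concluding that the form
\[ N' = (N_A \oplus -dN_A) \oplus -e(N_A \oplus -dN_A) \]
splits and neutralizes $(a,b,c,d,e)$.

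This form has degree $3$ and dimension $2 \cdot 54 = 108$. Since $108 > 81 = 3^4$ and $F$ is $C_4$, the form $N'$ is isotropic; because $N'$ splits $(a,b,c,d,e)$, this forces $(a,b,c,d,e) = 0$. Hence $k_5(F) = K_5^M(F)/3 = 0$, and the stated bound on the cohomological dimension follows.

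I do not expect any genuine obstacle here: all the substantive content has already been isolated in the construction of the Albert algebra adapted to a degree-three symbol (Example~\ref{albert norm}) and in the doubling mechanism (Proposition~\ref{norm doubling}), so the argument is a direct continuation of the pattern established in Theorems~\ref{merksus} and~\ref{albert 1}. The only thing to verify carefully is the dimension bookkeeping, namely that doubling twice starting from the $27$-dimensional Albert norm form yields $108$ variables, which comfortably exceeds the threshold $3^4 = 81$ needed to invoke the $C_4$ hypothesis.
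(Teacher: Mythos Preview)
Your proposal is correct and follows exactly the same approach as the paper: apply Proposition~\ref{norm doubling} once more to the $54$-dimensional form from Theorem~\ref{albert 1} to get a degree~$3$ form in $108 > 3^4$ variables splitting and neutralizing $(a,b,c,d,e)$, then invoke $C_4$. The paper's proof is essentially a one-line version of what you wrote.
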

\begin{proof}
If $A$ is the Albert algebra for $(a, b, c)$ then $N = N_A \oplus -d N_A$
is a $2 \cdot 3^3 = 54$ dimensional form splitting and neutralizing
$(a, b, c, d)$, and $N' = N \oplus -e N$ is a $108$ dimensional form
splitting and neutralizing $(a, b, c, d, e)$. Since $108 > 81 = 3^4$ we are done.
\end{proof}

\begin{thm} \label{main}
Let $F$ be $C_n$. Then
\begin{enumerate}
\item $\cdim_2(F) \leq n$,
\item $\cdim_3(F) \leq \left\{
		\begin{matrix}
			n & \text if n \leq 4 \\
			\lceil(n-3)(\log_2(3)) + 3\rceil & \text{otherwise,} \\
		\end{matrix}\right.$
\item $\cdim_p(F) \leq \left\{
		\begin{matrix}
			n & \text if n \leq 2 \\
			\lceil(n-2)(\log_2(p)) + 1\rceil & \text{otherwise.} \\
		\end{matrix}\right.$
\end{enumerate}
\end{thm}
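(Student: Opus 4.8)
The plan is to reduce each inequality to an assertion of the shape ``$F$ being $C_n$ forces $k_m(F)=0$,'' and then to apply the standard consequence of the norm residue isomorphism that the vanishing $k_m(L)=0$ for every finite $L/F$ is equivalent to $\cdim_p(F)\le m-1$. Two reductions streamline this. Since a finite extension of a $C_n$ field is again $C_n$, it suffices to prove the vanishing of the relevant $k_m$ over $F$ itself and then to apply it to each finite $L/F$ in turn. And since $[F(\mu_p):F]$ divides $p-1$, hence is prime to $p$, a restriction--corestriction argument makes $\res\colon k_m(F)\to k_m(F(\mu_p))$ injective, so we may assume $\mu_p\subset F$, which is what is needed to form symbol algebras. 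Finally, as $k_m(F)$ is generated by symbols, it is enough to kill an arbitrary symbol $(a_1,\dots,a_m)$.

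Part (1) is nothing but the second assertion of Theorem~\ref{milnor serre}, which gives $k_{n+1}(F)=0$ and hence $\cdim_2(F)\le n$. For parts (2) and (3) the mechanism is to iterate Proposition~\ref{norm doubling} along a factorization of the symbol. For general $p$ I would begin from the reduced norm $N_D$ of the symbol algebra $D_{(a_1,a_2)}$, which has dimension $p^2$ and, being adapted to $(a_1,a_2)$, neutralizes and splits it (Example~\ref{csa norm}). Writing $(a_1,\dots,a_m)=(a_1,a_2)\cup(a_3)\cup\cdots\cup(a_m)$ and applying Proposition~\ref{norm doubling} exactly $m-2$ times yields a degree-$p$ form which neutralizes and splits $(a_1,\dots,a_m)$ and has dimension $p^2\cdot 2^{m-2}$; the induction is carried entirely by Proposition~\ref{norm doubling}, whose output retains both properties at each stage. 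If $p^2\cdot 2^{m-2}>p^n$, then the $C_n$ property forces this form to be isotropic and, since it splits the symbol, the symbol is trivial. The inequality is equivalent to $m>(n-2)\log_2 p+2$; for $n>2$ and $p$ odd the right side is irrational, so the least admissible $m$ is $\lceil (n-2)\log_2 p+2\rceil$, whence $\cdim_p(F)\le \lceil (n-2)\log_2 p+2\rceil-1=\lceil (n-2)\log_2 p+1\rceil$. For $n\le 2$ one falls back on already established bounds: $C_1$ makes $N_D$ (of dimension $p^2>p$) isotropic, so $k_2(F)=0$ and $\cdim_p(F)\le 1$, while $n=2$ is Theorem~\ref{merksus}.

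For part (2) I would run the same argument with $p=3$ but start from the first Tits process Albert algebra norm $N_A$, of dimension $27=3^3$, which is adapted to and therefore neutralizes and splits the length-three symbol $(a_1,a_2,a_3)$ (Example~\ref{albert norm}). Applying Proposition~\ref{norm doubling} $m-3$ times produces a degree-$3$ form of dimension $3^3\cdot 2^{m-3}$ that neutralizes and splits $(a_1,\dots,a_m)$; the threshold $3^3\cdot 2^{m-3}>3^n$, i.e.\ $m>(n-3)\log_2 3+3$, together with $\cdim_3(F)\le m-1$, gives the bound recorded in (2) for $n>4$. For $n\le 4$ the cleaner estimate $\cdim_3(F)\le n$ is supplied directly by Theorems~\ref{albert 1} and~\ref{albert 2} (together with the $n\le 2$ cases above), which is why the statement splits at $n=4$.

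The genuine content of the theorem sits in the earlier lemmas, so the main obstacle is organizational rather than conceptual: one must verify that the neutralize-and-split property really does propagate through the iterated doubling (the base case being the Lemma that an adapted algebra's reduced norm neutralizes and splits its class, and the inductive step being Proposition~\ref{norm doubling}), convert the exponential dimension count $p^a\cdot 2^{m-a}>p^n$ into the stated ceiling expressions with due care for the strict inequality and the irrationality of $\log_2 p$, and correctly match the small-$n$ regimes ($n\le 2$ for general $p$, $n\le 4$ for $p=3$) to the separately proved linear bounds rather than to the formula.
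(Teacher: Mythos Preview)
Your proposal is correct and follows essentially the same route as the paper: part~(1) is Theorem~\ref{milnor serre}, and parts~(2) and~(3) are obtained by starting from the norm form of an adapted algebra (Albert for $p=3$, symbol algebra for general $p$), iterating Proposition~\ref{norm doubling} along the remaining slots of the symbol, and comparing the resulting dimension to $p^n$ via the $C_n$ hypothesis; the small-$n$ cases are handled separately by Theorems~\ref{merksus}, \ref{albert 1}, and \ref{albert 2}. Your indexing (symbols of length $m$, showing $k_m=0$) differs cosmetically from the paper's (symbols of length $m+1$, showing $\cdim\le m$), and your preliminary reductions to $\mu_p\subset F$ and to symbols are stated more explicitly than in the paper, but the substance is identical.
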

For example, if $F$ is $C_3$ then $\cdim_5(F), \cdim_7(F) \leq 4$. Note that
in particular, the $p$-cohomological dimension is bounded linearly
with respect to the Diophantine dimension, with the slope $\log_2(p)$.
\begin{proof}
The first part, corresponding to $\cdim_2$ follows from
Theorem~\ref{milnor serre}. 

For $\cdim_3$, the cases with $n \leq 4$ follow from
Propositions~\ref{albert 1} and \ref{albert 2} respectively. In
general, to check whether or not $\cdim_3(F) \leq m$, write a degree
$m+1$ symbol in $K_{m+1}^M/3$ as $\alpha = (a, b, c) \cup (d_1,
\ldots, d_{m-2})$. We wish to find a criterion on $m$ which will
guarantee that $\alpha = 0$. This works as follows: if $N_A$ is the
norm form for the first Tits process Albert algebra $A$ defined by
$(a, b, c)$, then inductively applying Proposition~\ref{norm doubling}
we obtain a form $N$ which splits $\alpha$ in $27 \cdot 2^{m-2}$
variables of degree $3$. In particular, by the $C_n$ property, we find
that this form is isotropic, and hence $\alpha$ is split when $27
\cdot 2^{m-2} > 3^n$ or $2^{m-2} > 3^{n - 3}$.  But this translates to
$m > (n-3)(\log_2(3)) + 3$, since $\log_2 3$ is irrational, this is
equivalent to saying $m \geq \lceil (n-3(\log_2(3)) + 3\rceil$.

The general case of $\cdim_p$ follows much like the case of $\cdim_3$
above, considering a symbol $\alpha = (a, b) \cup (c_1, \ldots,
c_{m-1})$, starting with Theorem~\ref{merksus}  and applying
Proposition~\ref{norm doubling} to obtain a form $N$ of degree $p$ and
dimension $p^2 \cdot 2^{m-1}$ splitting $\alpha$. Again, in this case,
the $C_n$ property guarantees that $\alpha$ is split in case $2^{m-1}
> p^{n - 2}$. The result follows.
\end{proof}

\begin{rem}
Although it is a weaker result, it is interesting to note that one may
associate an ``obvious form'' which splits a given symbol. In
particular if $\alpha = (a_1, \ldots, a_n) \in k_n(F)$, then the form
\(a_1 t_1^p + \cdots + a_n t_n^p\) splits $\alpha$.
\end{rem}
\begin{proof}
We induct on $n$, the case $n = 1$ being trivial and the case $n = 2$
following from the fact that $N_{D_{(a, b)}}$ when restricted to
the subspace $Fx + Fy$ is exactly the form $at^p + bs^p$. For the
general induction step, suppose that $\sum a_i r_i^p = 0$ for some
choice of $r_i \in F$, and set $u = \sum_{i = 0}^{n-2} a_i r_i^p$.
Then $-u = a_{n-1} r_{n-1}^p + a_n r_n^p$. By Lemma~\ref{symbol
identity}, we may write $(a_{n-1}, b_{n-1}) = (-u, v)$ for some $v \in
F^*$. But therefore we may write $\alpha = (a_1, \ldots, a_{n-2}, -u,
v)$. Considering the form $(a_1, \ldots, a_{n-2}, -u)$ we find that by
$\sum_{i = 1}^{n-2} a_i r_i^p + (-u) 1 = 0$, the induction hypothesis
implies $(a_1, \ldots, a_{n-2}, -u) = 0$ and so $\alpha = (a_1,
\ldots, a_{n-2}, -u) \cup (v) = 0$ as well.
\end{proof}

\section{Appendix: Norm forms on power associative algebras}
\label{appendix}

Recall that an algebra $A$ is called \textit{power associative} if all
associators of the form $\{a^i, a^j, a^k\}$ vanish, or equivalently,
if for every $a \in A$, the subalgebra $F[a]$ generated by $a$ is
associative and commutative. We say that $A$ is \textit{strictly
power associative} if for every field extension $L/F$, the algebra
$A_L = A \otimes_F L$ is power associative. Note that in case $F$ has
characteristic not equal to $2$, by linearizing associator
relations such as $\{x, x, y\}$, one sees that every power
associative algebra is automatically strictly power associative.

Let $A$ be a unital finite dimensional strictly power associative $F$-algebra.
We now recall the definition of the reduced norm function on $A$.  Set
$R = F[A^*]$ to be the ring of polynomial functions on $A$.  Consider
the generic element $x \in A_R$, defined as follows: if $a_1, \ldots
a_n$ is a basis for $A$ and $f_1, \ldots, f_n \in A^* \subset R =
F[f_1, \ldots, f_n]$ is the corresponding dual basis, then $x = \sum
a_i f_i$. In a coordinate free way, we may also write $x$ as the
identity map, thought of as an element of $Hom_F(A, A) = A \otimes A^*
\subset A \otimes R = A_R$. We may now consider the subalgebra $R[x]
\subset A_R$. Since $A_R$ is a finitely generated module and $R$ is
Noetherian, it follows that $R[x]$ is finitely generated $R$-module
and hence an integral extension. Let $m(T) = m_0 + m_1 T + \cdots +
m_{r-1} T^{r-1} + T^r \in R[T]$ be the minimal polynomial of $T$.
Since $m(x)$ is the zero polynomial function on $A$, it follows that
for any $a \in A$, $a$ is a root of function 
\[ \chi_a(T) = m_0(a) + m_1(a)T + \cdots + m_{r-1}(a) T^{r-1} + T^r\]
which we call the reduced characteristic polynomial of $a$. We define
the reduced norm of $a$, $N_A(a)$ to be $m_0(a)$. The reduced norm
function is the function $N_A = m_0 \in R$. We define the adjunct of
$a$, defined by $\cof_A(a)$ to be $m_1(a) + m_2(a) a + \cdots
m_{r-1}(a) a^{r-2} + a^{r-1}$. We then have $a (\cof_A(a)) = N_A(a)$.
Note that since these coincide with the standard characteristic and
minimal polynomials of the $R$-algebra $R[x]$ acting on itself, we
have the familiar identity $\chi_a(T) = N_A(T - a)$.

We say that a power associative finite dimension $F$-algebra $A$ has
degree $d$ if $\chi_A$ (and hence also $N_A$) is a degree
$d$ polynomial function.

Recall that a nonassociative algebra $A$ is called division in case
left and right multiplication by every nonzero element $a \in A$ are
both bijective. We say that $A$ is \textit{principally division}, if
for every $a \in A$, the subalgebra $F[a]$ generated by $a$ is
division. We say that $a \in A$ is invertible if there exists $b$
such that $ab = 1 = ba$.
\begin{lem} \label{pow prin div}
Suppose that $A$ is a power associative $F$-algebra of prime degree
$p$. Then $A$ is principally division if and only if $N_A$ is
anisotropic.
\end{lem}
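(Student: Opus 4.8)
The plan is to treat the two implications separately, extracting the easy direction from the adjunct identity and reserving the real work for the converse.

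First I would prove that anisotropy of $N_A$ forces $A$ to be principally division. Fix $a \in A$; I must show $F[a]$ is a field. Since $A$ is power associative, $F[a]$ is a finite dimensional commutative associative $F$-algebra, so it is a field exactly when each of its nonzero elements is invertible in it. Take any nonzero $b \in F[a]$; then $F[b] \subseteq F[a]$, and by anisotropy $N_A(b) \neq 0$. The adjunct identity $b\,\cof_A(b) = N_A(b)$ recorded in the appendix then shows that $b\cdot\big(N_A(b)^{-1}\cof_A(b)\big) = 1$ with $\cof_A(b) \in F[b] \subseteq F[a]$; as $F[a]$ is commutative this is a two-sided inverse of $b$ inside $F[a]$. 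Hence every nonzero element of $F[a]$ is a unit and $F[a]$ is a field, so $A$ is principally division.

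For the converse I argue, elementwise, that if $A$ is principally division then $N_A(a) \neq 0$ for every $a \neq 0$. If $a \in F\cdot 1$ this is immediate because $N_A$ restricts to the $p$-th power map on $F$, so $N_A(a) = a^p \neq 0$. Otherwise $E := F[a]$ is, by hypothesis, a field properly containing $F$, and $a$ is a unit of $E$; its minimal polynomial $\mu_a(T)$ is therefore irreducible with $\mu_a \neq T$, so $\mu_a(0) \neq 0$. Recalling that $N_A(a)$ is, up to sign, the constant term $\chi_a(0)$ of the reduced characteristic polynomial (via $\chi_a(T) = N_A(T - a)$), and that $\mu_a \mid \chi_a$ since $\chi_a(a) = 0$, it suffices to show that $\chi_a$ has no root at $0$, i.e. that $T \nmid \chi_a$.

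The crux, and the step I expect to be the main obstacle, is to pin down the factorization of $\chi_a$ when $E = F[a]$ is a field: concretely, to prove $\chi_a = \mu_a^{\,p/[E:F]}$ (in particular $[E:F] \mid p$), from which $\chi_a(0) = \mu_a(0)^{p/[E:F]} \neq 0$ follows at once. Without this one is stuck precisely in the case $\cof_A(a) = 0$, where the adjunct identity gives no information. To obtain the factorization I would base change to $\ov{F}$ and analyze $N_A$ on the commutative subalgebra $\ov{F}\otimes_F E$ generated by $a$: since the reduced norm is multiplicative on a commutative associative subalgebra, its restriction to this (\'etale, in the separable case) algebra is a monomial in the idempotent coordinates, and irreducibility of $\mu_a$ makes the Galois group permute those coordinates transitively, forcing every exponent to equal $p/[E:F]$. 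Equivalently, this is the standard fact that the reduced characteristic polynomial and the minimal polynomial of an element share the same irreducible factors; establishing it within the appendix's generic-element framework is where the substance of the lemma lies, the two implications above being formal once it is in hand.
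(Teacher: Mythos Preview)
Your proposal is correct and follows essentially the same route as the paper. The first implication is handled identically via the adjunct identity, and for the converse the paper invokes exactly the fact you isolate at the end---that $\chi_a$ and the minimal polynomial $m_a$ share the same irreducible factors---by citing Jacobson \cite{Jac:GTJ}, whereas you propose to prove the sharper $\chi_a = \mu_a^{\,p/[E:F]}$ directly via base change and a Galois-transitivity argument; either way the structure of the proof is the same.
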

\begin{proof}
Suppose $N_A$ is anisotropic and let $a \in A$. We must show that
$F[a]$ is division. Let $b \in F[a] \subset A$ with $b \neq 0$. Since
$N_A(b) \neq 0$, we may write $b (N_A(b))^{-1} (\cof_A(b)) = 1$, and
since $\cof_A(b) \in F[b] \subset F[a]$, it follows that $b$ is
invertible in $F[a]$. Therefore $F[a]$ is division.

Conversely, if $A$ is principally division, we wish to show that for
all $a \in A\setminus 0$, $N_A(a) \neq 0$. Let $m_a(t)$ be the
minimal polynomial of $a$. It follows from
\cite{Jac:GTJ}, that $m_a(t)$ and $\chi_a(t)$ have the same prime
factors. Since $F[a] = F[t]/(m_a(t))$ is a division algebra, it
follows that $m_a(t)$ is irreducible, and in particular, not
divisible by $t$. But therefore $t$ cannot divide $\chi_a(t)$ as
well. It therefore follows that $N_A(a)$, being (up to a sign) the
constant coefficient of $\chi_a(t)$ is also nonzero, as desired.
\end{proof}

\begin{lem} \label{principal division}
Suppose $A$ is a finite dimensional alternative algebra. Then $A$ is
principally division if and only if it is division.
\end{lem}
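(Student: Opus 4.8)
The plan is to reduce everything to the fundamental structural fact about alternative algebras, namely Artin's theorem: the subalgebra generated by any two elements of an alternative algebra is associative. Both directions rest on the preliminary observation that for a single element $a$, the subalgebra $F[a]$ is commutative and associative (since $A$ is power associative), so $F[a]$, being finite dimensional, is division precisely when it is a field, i.e. precisely when it is an integral domain.

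For the forward direction, suppose $A$ is division. Then $A$ has no zero divisors: if $ab = 0$ with $a \neq 0$, injectivity of left multiplication $L_a$ forces $b = 0$. Consequently, for any $a$ the finite-dimensional commutative associative algebra $F[a]$ is an integral domain, hence a field, hence division. Thus $A$ is principally division.

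For the reverse direction, which carries the real content, suppose $A$ is principally division and fix $a \neq 0$. Then $F[a]$ is a field, so $a$ has a two-sided inverse $a^{-1}$ lying inside $F[a]$. By finite dimensionality it suffices to show that $L_a$ and $R_a$ are injective. Suppose $ax = 0$. The subalgebra $B$ generated by $a$ and $x$ contains $a^{-1} \in F[a] \subseteq B$, and by Artin's theorem $B$ is associative; therefore $x = (a^{-1}a)x = a^{-1}(ax) = 0$. The symmetric computation $(xa)a^{-1} = x(aa^{-1}) = x$ applied to $xa = 0$ shows $R_a$ is injective as well. Hence $L_a$ and $R_a$ are bijective for every nonzero $a$, so $A$ is division.

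The one point requiring care is that the cancellation of $a$ must take place inside an associative subalgebra. This is exactly what principal division buys us: it places $a^{-1}$ in $F[a]$, hence inside the two-generated subalgebra $B$, so that Artin's theorem applies and $a^{-1}(ax) = (a^{-1}a)x$ is legitimate. Without knowing that the inverse lies in a subalgebra generated together with $x$ by at most two elements, the nonassociativity of $A$ would block the argument; locating the inverse correctly is therefore the main obstacle, and it is overcome precisely by the principal division hypothesis.
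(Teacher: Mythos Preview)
Your proof is correct and follows essentially the same line as the paper's: in both arguments the forward direction is the trivial ``division implies domain implies each $F[a]$ is a field,'' and the reverse direction locates $a^{-1}$ inside the field $F[a]$ and then uses the alternative law to cancel. The only difference is packaging: the paper phrases the cancellation step as ``left multiplication makes $A$ into a vector space over $F[a]$, so $L_a$ is invertible with inverse $L_{a^{-1}}$,'' whereas you invoke Artin's theorem explicitly to work inside the associative subalgebra generated by $a$ and $x$; these are two ways of expressing the same fact, and indeed your formulation makes the dependence on Artin's theorem (rather than merely the left-alternative identity $L_{a^2}=L_a^2$) more transparent.
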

\begin{proof}
Certainly if $A$ is division, it is also a domain. Since each
subalgebra $F[a]$ is then a finite dimensional domain, they are
domains, and hence $A$ is principally division.

For the converse, suppose that $A$ is principally division, and let
$a \in A$. We wish to show that left multiplication by $a$ gives an
bijection from $A$ to itself. The proof that right multiplication
also has this property follows from the same argument.

To see this, we start from the fact
that $F[a]$ is a field, and, using the fact that $A$ is left
alternative, left multiplication gives $A$ the
structure of a vector space over $F[a]$. But scalar multiplication by
a nonzero scalar is automatically an isomorphism of $A$ with inverse
given by $a^{-1}$. 
\end{proof}

\begin{lem} \label{albert principal}
Let $A$ be a first Tits process Albert algebra over $F$. Then $A$ is division,
if and only if it is principally division.
\end{lem}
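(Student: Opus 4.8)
The plan is to show that each of the two properties is equivalent to anisotropy of the reduced norm $N_A$, and therefore to each other. One implication needs nothing special about the Albert structure. If $A$ is division then, since $A$ is finite dimensional, left multiplication $L_a$ being injective for every $a \neq 0$ is exactly the statement that $A$ has no zero divisors. Given any $a \in A$, the subalgebra $F[a]$ is finite dimensional, commutative and associative (as $A$ is strictly power associative) and inherits the absence of zero divisors from $A$; a finite dimensional commutative domain over $F$ is a field, so $F[a]$ is division. Hence $A$ is principally division. Note this half of the argument works for any finite dimensional strictly power associative algebra, so the real content is the converse.

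For the reverse implication I would use that a first Tits process Albert algebra is power associative of degree $3$, which is prime, so Lemma~\ref{pow prin div} applies and gives that $A$ is principally division if and only if $N_A$ is anisotropic. It therefore suffices to show that anisotropy of $N_A$ forces $A$ to be division. The cubic structure is precisely what makes this plausible: for every $a$ we have $a\cdot\cof_A(a) = N_A(a)$ with $\cof_A(a) \in F[a]$, so whenever $N_A(a) \neq 0$ the element $a$ is invertible with inverse $N_A(a)^{-1}\cof_A(a)$. When $N_A$ is anisotropic, every nonzero $a$ has $N_A(a) \neq 0$, and hence every nonzero element of $A$ is invertible.

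The main obstacle is upgrading this \emph{elementwise} invertibility to the statement that $A$ is division, i.e.\ (for finite dimensional $A$) that $A$ has no zero divisors: for a general power associative algebra, elementwise invertibility need not preclude zero divisors, and unlike the alternative case of Lemma~\ref{principal division} there is no $F[a]$-module structure to exploit, since $A$ is not alternative. To close this gap I would invoke the structure theory of cubic Jordan (Albert) algebras, where it is classical that a first Tits process Albert algebra is a division algebra exactly when its reduced norm form is anisotropic (see \cite{PeRa} and the cubic Jordan algebra literature). Combining the three equivalences, $A$ is division $\iff N_A$ anisotropic $\iff A$ is principally division, which is the assertion.
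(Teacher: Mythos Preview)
Your argument is correct, and the forward implication is exactly what the paper dismisses with ``clearly.'' For the converse you route through norm anisotropy: principally division $\iff N_A$ anisotropic (by Lemma~\ref{pow prin div}), and then $N_A$ anisotropic $\iff$ division by citing the Albert algebra literature. This works, and you are right to flag that elementwise invertibility alone does not suffice in the non-alternative setting, so an external input is genuinely needed.

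The paper takes a different and somewhat more direct route for the converse: it argues the contrapositive using the division/split dichotomy for Albert algebras. If $A$ is not division then by \cite[Chapter~9, Theorem~20]{Jac:SRJA} it is split, hence contains $M_n(F)^+$, which visibly has nilpotent elements; any nilpotent $a$ gives $F[a]$ non-division, so $A$ is not principally division. Both approaches ultimately lean on the same structural rigidity of Albert algebras, but the paper's version avoids the detour through Lemma~\ref{pow prin div} and produces an explicit witness (a nilpotent) to the failure of principal division, whereas your version has the virtue of staying inside the norm-form framework that the rest of the paper is built on.
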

\begin{proof}
Clearly if $A$ is division it is principally division.  For the
converse, suppose that $A$ is not division.  By \cite[Chapter~9,
Theorem~20]{Jac:SRJA}, $A$ is split, and hence must contain the
algebra $M_n(F)^+$. But it follows that $A$ is therefore not
principally division, since, for example, it contains nilpotent
elements.
\end{proof}
 
\bibliographystyle{alpha}
\def\cprime{$'$} \def\cprime{$'$} \def\cprime{$'$} \def\cprime{$'$}
  \def\cprime{$'$} \def\cftil#1{\ifmmode\setbox7\hbox{$\accent"5E#1$}\else
  \setbox7\hbox{\accent"5E#1}\penalty 10000\relax\fi\raise 1\ht7
  \hbox{\lower1.15ex\hbox to 1\wd7{\hss\accent"7E\hss}}\penalty 10000
  \hskip-1\wd7\penalty 10000\box7}

\bibliography{citations}

\begin{thebibliography}{KMRT98}

\bibitem[Ax65]{Ax}
James Ax.
\newblock A field of cohomological dimension {$1$} which is not {$C_{1}$}.
\newblock {\em Bull. Amer. Math. Soc.}, 71:717, 1965.

\bibitem[Jac59]{Jac:GTJ}
N.~Jacobson.
\newblock Some groups of transformations defined by {J}ordan algebras. {I}.
\newblock {\em J. Reine Angew. Math.}, 201:178--195, 1959.

\bibitem[Jac68]{Jac:SRJA}
Nathan Jacobson.
\newblock {\em Structure and representations of {J}ordan algebras}.
\newblock American Mathematical Society Colloquium Publications, Vol. XXXIX.
  American Mathematical Society, Providence, R.I., 1968.

\bibitem[KMRT98]{BofInv}
Max-Albert Knus, Alexander Merkurjev, Markus Rost, and Jean-Pierre Tignol.
\newblock {\em The book of involutions}.
\newblock American Mathematical Society, Providence, RI, 1998.
\newblock With a preface in French by J.\ Tits.

\bibitem[LMS07]{LMS:GMSGC}
Nicole Lemire, J{\'a}n Min{\'a}{\v{c}}, and John Swallow.
\newblock Galois module structure of {G}alois cohomology and partial
  {E}uler-{P}oincar\'e characteristics.
\newblock {\em J. Reine Angew. Math.}, 613:147--173, 2007.

\bibitem[PR96]{PeRa}
Holger~P. Petersson and Michel~L. Racine.
\newblock An elementary approach to the {S}erre-{R}ost invariant of {A}lbert
  algebras.
\newblock {\em Indag. Math. (N.S.)}, 7(3):343--365, 1996.

\bibitem[Ser94]{Serre:CG}
Jean-Pierre Serre.
\newblock {\em Cohomologie galoisienne}, volume~5 of {\em Lecture Notes in
  Mathematics}.
\newblock Springer-Verlag, Berlin, fifth edition, 1994.

\bibitem[Voe03]{Voe:Z2}
Vladimir Voevodsky.
\newblock Motivic cohomology with {${\bf Z}/2$}-coefficients.
\newblock {\em Publ. Math. Inst. Hautes \'Etudes Sci.}, (98):59--104, 2003.

\end{thebibliography}

\end{document}